\renewcommand{\d}{\partial}
\newcommand{\ddbar}{\sqrt{-1}\d\overline{\d}}
\newtheorem{thm}{Theorem}
\newtheorem{lem}[thm]{Lemma}
\theoremstyle{definition}
\renewcommand{\[}{\begin{equation}}
\renewcommand{\]}{\end{equation}}
\newcommand{\la}{\lambda}
\newcommand{\vep}{\varepsilon}
\newcommand{\tomega}{\tilde \omega}
\newcommand{\NN}{\mathbb{N}}
\newcommand{\QQ}{\mathbb{Q}}
\newcommand{\CC}{\mathbb{C}}
\newcommand{\PP}{\mathbb{CP}}
\newcommand{\Rc}{\mathrm{Ric}}
\newcommand{\Vol}{\mathrm{Vol}}
\newcommand{\sR}{\mathcal{R}}
\newcommand{\sS}{\mathcal{S}}
\title[Metric rigidity and almost maximal volume of K\"ahler manifolds]{Metric rigidity of K\"ahler manifolds with lower Ricci bounds and almost maximal volume}
\author[V. V. Datar]{Ved Datar}
\author[H. Seshadri]{Harish Seshadri}
\author[J. Song]{Jian Song}
\address{Department of Mathematics, Indian Institute of Science, Bangalore, India - 560012}
\email{vvdatar@iisc.ac.in, harish@iisc.ac.in}
\address{ Department of Mathematics,  Rutgers University, Piscataway, NJ 08854}
\email{jiansong@math.rutgers.edu}
\thanks{Research supported in
part by National Science Foundation grant DMS-1711439, UGC (Govt.\   of India) grant no.\ F.510/25/CAS- II/2018(SAP-I), and the Infosys Young investigator award.}
\begin{document}
\begin{abstract}
In this short note we prove that a K\"ahler manifold with lower Ricci  curvature bound and almost maximal volume  is Gromov-Hausdorff close to the projective space with the Fubini-Study metric. This is done by combining the recent results on holomorphic rigidity of such K\"ahler manifolds \cite{Z1} with the structure theorem of Tian-Wang \cite{TW} for almost Einstein manifolds. This can be regarded as the  complex analog of the result  on Colding on the shape of Riemannian manifolds with  almost maximal volume.
\end{abstract}
\maketitle

\section{Introduction}
In this note we wish to study metric rigidity of K\"ahler manifolds $(M^n,\omega)$ satisfying 
\begin{align}\label{eq:ric}
\Rc(\omega) \geq  \omega,
\end{align} 
and with almost maximal volume. Recently Zhang \cite{Z1} proved that any K\"ahler manifold satisfying \eqref{eq:ric} must have $$\Vol(M,\omega):= \int_{M}\omega^n \leq \Vol(\PP^n,\omega_{\PP^n}).$$ Here $\omega_{\PP^n}$ is the Fubini-Study metric on $\PP^n$ with $\Rc(\omega_{\PP^n}) = \omega_{\PP^n}$. Moreover, Zhang proved that the maximal volume is attained if and only if $(M,\omega)$ is isometric to $(\PP^n,\omega_{\PP^n})$. For K\"ahler-Einstein Fano manifolds such optimal bounds were proved earlier by Berman-Berndtsson \cite{BB} in presence of a $\CC^*$ action with finitely  many fixed  points, and unconditionally by Fujita \cite{F}. On the other hand Colding in \cite{Col} proved that an $n$-dimensional Riemannian manifold with Ricci curvature greater or equal to $(n-1)$   and almost maximal volume  is close to the round sphere in Gromov-Hausdorff distance. The main purpose of this note is to establish the following metric rigidity result as the complex analogue of Colding's theorem. 
\begin{thm}\label{thm:main}
For all $\vep>0$, there exists $\delta = \delta(\vep,n)>0$ such that  if $(M^n, \omega)$ is  a K\"ahler manifold satisfying \eqref{eq:ric} and
$$\Vol(M, \omega)   >  (1-\delta) \Vol(\PP^n, \omega_{\PP^n}),$$
then $$d_{GH}\Big((M,\omega),(\PP^n,\omega_{\PP^n})\Big)  < \vep,$$
where $d_{GH}$ is the Gromov-Hausdorff distance.
\end{thm}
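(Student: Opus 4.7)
The plan is to argue by contradiction using a standard compactness-rigidity scheme. Suppose the statement fails; then one obtains a sequence $(M_i^n, \omega_i)$ of K\"ahler manifolds with $\Rc(\omega_i) \geq \omega_i$ and $\Vol(M_i, \omega_i) \to \Vol(\PP^n, \omega_{\PP^n})$, but with $d_{GH}((M_i, \omega_i), (\PP^n, \omega_{\PP^n})) \geq \vep_0$ for some fixed $\vep_0 > 0$. The Ricci lower bound provides a uniform diameter bound via Myers' theorem and a uniform non-collapsing estimate via Bishop-Gromov, so by Gromov's precompactness theorem one can pass to a Gromov-Hausdorff subsequential limit $(X_\infty, d_\infty)$, and Colding's volume convergence theorem ensures $\mathcal{H}^{2n}(X_\infty) = \Vol(\PP^n, \omega_{\PP^n})$.

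Next I would invoke the Tian-Wang structure theorem \cite{TW} to place a complex-analytic structure on the limit: $X_\infty$ is homeomorphic to a normal $\QQ$-Fano variety with log terminal singularities whose singular locus has complex codimension at least two, and the convergence $\omega_i \to \omega_\infty$ is $C^\infty_{\mathrm{loc}}$ on the regular part $X_\infty^{\mathrm{reg}}$. In particular the inequality $\Rc(\omega_\infty) \geq \omega_\infty$ persists on $X_\infty^{\mathrm{reg}}$, and $\omega_\infty$ extends to a singular K\"ahler current on the ambient variety. Thus the limit is a $\QQ$-Fano variety equipped with a singular K\"ahler metric satisfying $\Rc \geq \omega$ with total volume exactly $\Vol(\PP^n,\omega_{\PP^n})$.

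Finally, I would apply the equality case of Zhang's theorem \cite{Z1}, or rather its extension to this singular limit. Algebro-geometrically, Fujita's sharp degree bound \cite{F} forces the underlying $\QQ$-Fano variety to be $\PP^n$; once this is known, the singular K\"ahler metric on $\PP^n$ with $\Rc \geq \omega$ and maximal volume must coincide with $\omega_{\PP^n}$ by the smooth rigidity statement of \cite{Z1} together with the $C^\infty_{\mathrm{loc}}$ convergence on the dense regular set. This yields $(X_\infty, d_\infty) \cong (\PP^n, \omega_{\PP^n})$, contradicting the Gromov-Hausdorff lower bound on the sequence. The principal technical obstacle is bridging the smooth rigidity of \cite{Z1} to the \emph{a priori} singular limit $X_\infty$: one needs to verify that the volume-maximality arguments (based on singular Monge-Amp\`ere theory and the sharp inequalities of Berman-Berndtsson \cite{BB} and Fujita \cite{F}) survive on log terminal $\QQ$-Fano varieties, or equivalently that the limit can be upgraded to a smooth K\"ahler manifold before Zhang's theorem is invoked.
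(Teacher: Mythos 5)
There is a genuine gap, in fact two. First, the Tian--Wang theorem \cite{TW} is not a structure theorem for arbitrary non-collapsed sequences of K\"ahler manifolds with $\Rc(\omega_i)\geq \omega_i$: it applies only to \emph{almost K\"ahler-Einstein} sequences, which requires in addition the $L^1$-smallness of $\Rc(\omega_i)-\lambda_i\omega_i$, the existence of the (normalized) Ricci flow on a unit time interval, and the smallness of the integrated scalar curvature deviation along the flow. Verifying these conditions forces $[\omega_i]$ to be (a rescaling of) $c_1(M_i)$, since the argument uses $\int_M (S_{\omega_i(t)}-n)\,\omega_i(t)^n=0$ and Perelman's long-time existence for the normalized K\"ahler-Ricci flow; for a general K\"ahler manifold with $\Rc(\omega)\geq\omega$ the class $[\omega]$ need not be proportional to $c_1(M)$. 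This is precisely why the paper first invokes Liu's holomorphic rigidity (appendix of \cite{Z1}) to conclude $M\cong\PP^n$, so that $b_2=1$ makes $[\omega]$ a multiple of $c_1$, and only then rescales to get $\tilde\omega_i\in c_1(\PP^n)$ with $\Rc(\tilde\omega_i)\geq(1-\delta_i)\tilde\omega_i$. If instead you want a structure theorem under a bare Ricci lower bound you must use Liu--Sz\'ekelyhidi \cite{LS}, but then your claims that the convergence is $C^\infty_{\mathrm{loc}}$ on the regular part and that the inequality $\Rc(\omega_\infty)\geq\omega_\infty$ ``persists'' there are unjustified: smooth convergence on the regular set is not available from a one-sided Ricci bound (in \cite{TW} it comes from the almost-Einstein condition, and the limit equation there is the equality $\Rc(\omega_\infty)=\omega_\infty$, not an inequality).

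Second, your closing step --- ``apply the equality case of Zhang's theorem, or rather its extension to this singular limit'' --- is exactly the missing ingredient, and you acknowledge it without resolving it; as written the proof does not close. Moreover, to invoke Fujita's bound \cite{F} on the limit you would need the algebro-geometric volume $(-K)^n$ of the limit variety to be the relevant quantity and some K-semistability of the limit, which again presupposes $[\omega_i]=c_1$ and a Donaldson--Sun type identification. The paper avoids any singular version of Zhang's volume rigidity altogether: after Liu's biholomorphism the problem reduces to a Gromov--Hausdorff stability statement for the K\"ahler--Einstein manifold $(\PP^n,\omega_{\PP^n})$ (Theorem \ref{thm:gh-einstein}), proved by checking the almost-K\"ahler-Einstein conditions, applying \cite{TW} and the Donaldson--Sun arguments \cite{DS} to obtain a $\QQ$-Fano limit with a weak K\"ahler--Einstein metric, and then using K-stability and uniqueness of K\"ahler--Einstein metrics to identify the limit with $(M,\omega_{KE})$. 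To repair your outline you should either begin with Liu's holomorphic rigidity as the paper does, or replace \cite{TW} by \cite{LS} together with the complex Monge--Amp\`ere limit argument sketched in the paper's alternative proof; in both cases the final identification comes from K-stability and uniqueness, not from a singular extension of the volume rigidity theorem.
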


The starting point for this paper is the almost holomorphic rigidity proved by Liu in the appendix of \cite{Z1}. Liu proved that  if $(M^n,\omega)$ is a K\"ahler manifold satisfying \eqref{eq:ric},   $M$ must be biholomorphic to $\PP^n$ if the volume of $(M^n, \omega)$ is sufficiently close to that of $(\PP^n, \omega_{\PP^n})$.  This can be regarded as a complex version of Perelman's result in \cite{P}. In particular, the K\"ahler manifold $M$ in Theorem \ref{thm:main} must be $\PP^n$ and indeed Theorem \ref{thm:main} is an analytic or metric extension of Liu and Zhang's theorem. The proof of Theorem \ref{thm:main} relies on the structure theorem of  Tian and Wang   \cite{TW} on Gromov-Hausdorff limits of almost Einstein manifolds. We also offer an alternate proof relying on the recent  results of Liu and  Szekelyhidi on structure of  non-collapsed Gromov-Hausdorff limits of K\"ahler manifolds with a Ricci curvature lower bound.

The results of \cite{Z1} rely on recent works on stability thresholds and $K$-stability  from algebraic geometry. It will be interesting to obtain an independent differential geometric proof for the holomorphic rigidity of $\PP^n$.

\section{Proof of the main theorem}

We will first prove the following general result.  

\begin{thm}\label{thm:gh-einstein}
Let $(M^n,\omega_{KE})$ be a K\"ahler-Einstein manifold. Let $\delta_i\rightarrow 0$ and $\omega_i\in   c_1(M)$ such that $$\Rc(\omega_i) \geq (1-\delta_i)\omega_i.$$ Then $$(M,\omega_i)\xrightarrow{d_{GH}}(M,\omega_{KE}).$$
\end{thm}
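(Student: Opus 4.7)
The plan is to show that every Gromov-Hausdorff subsequential limit of $(M,\omega_i)$ is isometric to $(M,\omega_{KE})$, from which Gromov-Hausdorff convergence of the full sequence follows. Since $[\omega_i]=c_1(M)$, the volume $\int_M \omega_i^n = c_1(M)^n$ is a topological invariant, so the sequence is non-collapsed. Myers' theorem applied to $\Rc(\omega_i)\geq(1-\delta_i)\omega_i$ yields a uniform diameter bound, so by Gromov's precompactness theorem $(M,\omega_i)$ is precompact in the Gromov-Hausdorff topology. It therefore suffices to identify an arbitrary subsequential limit $(X,d_\infty)$.

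I would then invoke the Tian-Wang structure theorem. The cohomological identity $\int_M (\Rc(\omega_i)-\omega_i)\wedge\omega_i^{n-1} = 0$ combined with the pointwise lower bound $\Rc(\omega_i)-\omega_i\geq -\delta_i\omega_i$ forces $\Rc(\omega_i)-\omega_i \to 0$ in $L^1$, so $\{(M,\omega_i)\}$ is almost Einstein in the sense of Tian-Wang. Their theorem (together with the Donaldson-Sun type algebraic framework) identifies $(X,d_\infty)$ with a normal $\QQ$-Fano variety carrying a weak K\"ahler-Einstein current $\omega_\infty \in c_1(X)$, and realizes the Gromov-Hausdorff convergence as an algebraic degeneration of $M$ to $X$ inside a fixed Hilbert scheme of pluri-anticanonical embeddings.

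To finish, I would invoke K-stability. By a theorem of Berman, $M$ is K-polystable since it admits a smooth K\"ahler-Einstein metric, and $X$ is likewise K-polystable since it admits a weak K\"ahler-Einstein metric in $c_1(X)$. Because the only K-polystable special degeneration of a K-polystable Fano variety is itself, we conclude that $X \cong M$ as $\QQ$-Fano varieties. The Bando-Mabuchi uniqueness of K\"ahler-Einstein metrics, extended to the singular setting by Berman-Boucksom-Eyssidieux-Guedj-Zeriahi, then yields $\omega_\infty = \sigma^*\omega_{KE}$ for some $\sigma \in \mathrm{Aut}_0(M)$, so $(X,d_\infty)$ is isometric to $(M,\omega_{KE})$, as required.

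The main obstacle is the algebraic degeneration step: one needs the Tian-Wang structure theorem in this one-sided Ricci regime to deliver a limit equipped with enough algebraic data, namely a flat family of projective varieties along which $M$ specializes to $X$, so that the K-polystability identification $X \cong M$ can be executed. A secondary subtlety is the possible non-triviality of $\mathrm{Aut}_0(M)$: one recovers the limit metric only up to an automorphism of $M$, which is invisible at the Gromov-Hausdorff level but would obstruct stronger forms of convergence.
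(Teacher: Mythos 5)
Your proposal follows essentially the same route as the paper: verify that the sequence is almost K\"ahler--Einstein in the sense of Tian--Wang (the cohomological identity plus the one-sided bound giving $L^1$ decay of $\Rc(\omega_i)-\omega_i$), apply their structure theorem together with the Donaldson--Sun framework to realize the limit as a normal $\QQ$-Fano variety carrying a weak K\"ahler--Einstein metric, and then use K-(poly)stability of $M$ (via the existence of $\omega_{KE}$) and uniqueness of K\"ahler--Einstein metrics to identify the limit with $(M,\omega_{KE})$. The only point you gloss over is that Tian--Wang's definition also requires the normalized K\"ahler--Ricci flow to exist on $[0,1]$ with the space-time integral $E_i$ of $|S_{g_i}-n|$ tending to zero, which the paper checks via Perelman's long-time existence and the maximum principle for the scalar curvature; this is routine under your hypotheses but must be verified before invoking their structure theorem.
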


Before we begin the proof, let us recall the definition of almost K\"ahler-Einstein  manifolds from \cite{TW}. A sequence of closed K\"ahler manifolds $(M_i^n,\omega_i,p_i)$ is said to be  {\em almost K\"ahler-Einstein} if the following conditions are satisfied. 

\begin{itemize}
\item $\Rc(\omega_i) \geq -\omega_i$
\item $p_i\in M_i$ and $$|B_{\omega_i}(p_i,1)| \geq \kappa>0.$$ 
\item$$ F_i:= \int_{M_i}|\Rc(\omega_i) - \la_i\omega_i| \omega_i^n \xrightarrow{i\rightarrow\infty} 0.$$
\item For some $\la_i\in [-1,1]$, the flow $$\frac{\partial \omega_i}{\partial t} = -\Rc(\omega_i) + \la_i \omega_i$$ has a solution on $M_i\times [0,1]$. Moreover, $$E_i:= \int_0^1\int_{M_i}|S_{g_i} - n\la_i|\,\omega_{i}(t)^n\,dt \xrightarrow{i\rightarrow \infty}0.$$
\end{itemize}

\begin{thm}[Theorem 2 in \cite{TW}]\label{thm:ae} Let $(M^n_i,\omega_i,p_i)$ be a sequence of pointed almost K\'ahler-Einstein manifolds of (complex)  dimension $n$ with $\la_i = 1$. Let $(Z,d)$ be a subsequential Gromov-Hausdorff limit. Then there exist a regular-singular decomposition $Z = \sR\cup \sS$ such that 
\begin{itemize}
\item $\sR$ is a smooth convex, open K\"ahler manifold with complex structure $J_\infty$ and  K\"ahler form $\omega_\infty$ satisfying $$\Rc(\omega_\infty)  = \omega_{\infty}.$$
\item $\dim \sS \leq 2n-4.$
\end{itemize}

\end{thm}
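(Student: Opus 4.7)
The plan is to combine Cheeger--Colding structure theory for non-collapsed limits with Ricci lower bounds with K\"ahler--Ricci flow smoothing, using the $L^1$ Ricci and scalar curvature hypotheses built into the definition of ``almost K\"ahler--Einstein''.

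First, I would apply the Cheeger--Colding--Tian non-collapsed compactness theorem: the uniform bound $\Rc(\omega_i)\ge -\omega_i$ together with $|B_{\omega_i}(p_i,1)|\ge \kappa$ yields, after passing to a subsequence, a pointed Gromov--Hausdorff limit $(Z,d,p)$ and a regular-singular decomposition $Z=\sR\cup\sS$. By Cheeger--Colding, every tangent cone at a point of $\sR$ is $\RR^{2n}$, $\sR$ is open, rectifiable, and geodesically convex in $Z$, and a priori $\dim_{\mathcal{H}}\sS\le 2n-2$.

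Second, to upgrade convergence on $\sR$ to smooth convergence and identify the limit metric as K\"ahler--Einstein, I would use the normalized K\"ahler--Ricci flow $\partial_t\omega_i = -\Rc(\omega_i)+\omega_i$ supplied by the hypothesis. The condition $F_i\to 0$ provides $L^1$ control of $|\Rc(\omega_i)-\omega_i|$ at $t=0$, while $E_i\to 0$ gives $L^1$ spacetime control of the scalar curvature deviation along the flow. Combined with Perelman's no-local-collapsing along the flow and an $\varepsilon$-regularity theorem of Chen--Wang type for the K\"ahler--Ricci flow, this produces, at every $x\in\sR$, a uniform scale on which $|\text{Rm}(\omega_i(t))|$ is bounded for some small $t>0$. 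Passing to a further subsequence and applying Cheeger--Gromov compactness in a suitable diffeomorphism gauge, one extracts a smooth K\"ahler limit $(\sR,J_\infty,\omega_\infty)$; the complex structures $J_i$ (preserved along each flow) converge in $C^\infty_{\mathrm{loc}}$ to $J_\infty$. Taking limits in the $L^1$ bound $\int |\Rc(\omega_i)-\omega_i|\,\omega_i^n\to 0$ using smooth convergence on compact subsets of $\sR$ then gives $\Rc(\omega_\infty)=\omega_\infty$ on $\sR$.

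Third, for the codimension bound $\dim\sS\le 2n-4$, I would follow the Cheeger--Colding--Tian cone-splitting strategy adapted to the almost-Einstein K\"ahler setting. Every tangent cone at a point of $\sS$ splits as $\RR^k\times C(Y)$ where $C(Y)$ carries a limiting (possibly singular) K\"ahler--Einstein structure by the argument above. A cross-section of real dimension $\le 1$ would yield a 2-cone $C_\beta$ with cone angle $2\pi\beta<2\pi$; such a cone is ruled out because the Chern--Weil representative of $c_1$ produces a definite defect concentrated along the singular stratum, incompatible with the $L^1$-smallness of $\Rc(\omega_i)-\omega_i$ provided by $F_i\to 0$. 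Hence every cross-section has real dimension $\ge 3$, giving $\dim_{\mathcal{H}}\sS\le 2n-4$.

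The hardest step is the second one: producing genuine smooth K\"ahler--Einstein convergence on $\sR$ from the very weak hypothesis that the Ricci deviation is merely small in $L^1$. Without a uniform bound on $|\Rc|$ or $|\text{Rm}|$ one cannot invoke Anderson-type $\varepsilon$-regularity directly; the short-time K\"ahler--Ricci flow must be used to smooth the $\omega_i$, and one must control the flow uniformly in $i$ on uniform scales around regular points, while simultaneously showing the limit metric inherits the Einstein equation rather than just a weak approximation of it. The codimension-four estimate for $\sS$ then crucially depends on having this smooth K\"ahler structure on $\sR$ in place.
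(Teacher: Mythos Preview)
The paper does not supply its own proof of this statement: Theorem~\ref{thm:ae} is quoted verbatim as ``Theorem~2 in \cite{TW}'' and used as a black box. The only additional information the paper extracts (in the paragraph following the statement) is that, \emph{from the proof in \cite{TW}}, each tangent cone of $Z$ is a metric cone whose regular part carries a Ricci-flat K\"ahler cone metric with potential $r^2/2$; this is cited to Proposition~5.2 and Lemma~5.2 of \cite{TW}, not re-derived. So there is no in-paper argument against which to compare your proposal.

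Your outline is a fair high-level summary of the Tian--Wang strategy itself: Cheeger--Colding non-collapsed theory gives the decomposition $Z=\sR\cup\sS$ with $\dim\sS\le 2n-2$; K\"ahler--Ricci flow smoothing plus $\varepsilon$-regularity (the ``canonical radius'' machinery of \cite{TW}) upgrades convergence on $\sR$ to smooth K\"ahler convergence and identifies $\Rc(\omega_\infty)=\omega_\infty$; and the improvement to $\dim\sS\le 2n-4$ comes from ruling out codimension-two cone factors. On that last point your heuristic (a Chern--Weil defect incompatible with $F_i\to 0$) is not quite the argument in \cite{TW}: there the exclusion of $\RR^{2n-2}\times C_\beta$ with $\beta<1$ is obtained by first establishing a limiting K\"ahler (indeed $\ddbar$-potential) structure on the regular part of every tangent cone, and then using a slicing/holomorphic argument to force $\beta=1$. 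Your identification of the smooth-convergence step as the crux is accurate, but since the present paper simply imports the result, none of this is needed here.
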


\begin{proof}[Proof of Theorem \ref{thm:gh-einstein}]
First we need the following observation from \cite{TW}, whose proof we reproduce for the convenience of the reader.
\begin{lem}[Theorem 6.2 in \cite{TW}]
The sequence $(M,\omega_i,p)$ from the statement of Theorem \ref{thm:gh-einstein} forms a sequence of almost-Einstein manifolds with $\la_i = 1$.
\end{lem} 
\begin{proof}
The Ricci lower bound is from the hypothesis, and the volume lower bound follows from Bishop-Gromov inequality, Myers theorem and the fact that the volume of $\omega_i$ is constant.  Moreover, it is  well known through the work of Perelman that the Ricci flow $$\begin{cases}\frac{\partial \omega_{i}(t)}{\partial t} = -\Rc(\omega_i(t)) + \omega_i(t)\\ \omega_i(0)=\omega_i\end{cases}$$ exists for all time. All we need to prove is that $F_i$ and $E_i$ converge to zero.  

Note that since $S_{\omega_{i}} - n\geq -n\delta_i$, by the maximum principle for the scalar curvature under Ricci flow, we have the bound $$S_{\omega_i(t)}  - n\geq -n\delta_i e^t.$$ for all $i$ and for all $t$. On the other hand, since the K\"ahler class remains fixed, $$\int_M (S_{\omega_i(t)}-n)\frac{\omega_i(t)^n}{n!} = 0,$$ and hence 
\begin{equation}\label{eq:scl1}
\int_M |S_{\omega_i(t)} - n|\frac{\omega_i(t)^n}{n!}\leq n\delta_ie^t(c_1(M))^n.
\end{equation} %Integrating, we see that $$E_i \leq c_n\delta_i\rightarrow 0$$ where $c_n$ is some dimensional constant.  
Integrating in $t$, we obtain the required decay on $E_i$. Next, since  $\Rc(\omega_i) - (1-\delta_i)\omega_i\geq 0$, we have 
\begin{align*}
F_i&:= \int_M|\Rc(\omega_i) - \omega_i|\omega_i^n \\
&\leq \int_M |\Rc(\omega_i) - (1-\delta_i)\omega_i|\omega_i^n + n\delta_ic_1(M)^n\\
&\leq 10n^{3/2}\delta_ic_1(M)^n\xrightarrow{i\rightarrow \infty}0,
\end{align*}
where  we used \eqref{eq:scl1} at $t=0$ to estimate the first integral.
\end{proof}
After passing to a subsequence,  $$(M,\omega_i,p)\xrightarrow{d_{GH}} (Z,d,p_\infty)$$ where $Z$ has the regular-singular  decomposition as in  Theorem \ref{thm:ae}. From the proof of Theorem \ref{thm:ae} in \cite{TW} it follows that  any tangent cone is a metric cone $C(Y)$ over a link $Y$  with singular set  $\sS_Y$  of real co-dimension  at least four.  Moreover, on  the regular part of $C(Y)$, the cone metric $g_{C(Y)}  = dr^2+  r^2g_Y$ is Ricci flat and it's K\"ahler form is given by $$ \omega_{C(Y)}= \frac{\sqrt{-1}}{2}\partial\bar\partial r^2,$$  where $r$  is the distance from the vertex (cf.\ Proposition 5.2 and Lemma 5.2 in \cite{TW}). The arguments in \cite{DS} now apply and we have the following.

\begin{lem}\label{lem:qfano}
\begin{enumerate}
\item For sufficiently large $k\in \NN$, there is a sequence of embeddings $T_i:M\rightarrow \PP^N$ by sections of $H^0(M,-K_M^k)$ which are orthonormal with respect to the metric induced by $\omega_i$ such that the  flat limit $W = \lim_{i\rightarrow \infty}T_i(M)$ is  a normal $\QQ$-Fano variety.
\item The limiting K\'ahler metric $\omega_\infty$ extends globally to  a weak K\"ahler-Einstein metric on $W$.
\end{enumerate}
\end{lem}

By a weak K\"ahler-Einstein metric we mean that $\omega_\infty = \ddbar\varphi_\infty$ where $e^{-r\varphi_\infty}$ is a continuous hermitian metric on $K_W^{-r}$, and $\varphi_\infty$ satisfies the following Monge-Ampere equation $$(\ddbar\varphi_\infty)^n =  e^{-\varphi_\infty}.$$    %We defer the proof to the next section since it requires a discussion of the proof of Theorem \ref{thm:ae} from \cite{TW} and the construction in \cite{DS}.

Continuing with our proof of the Theorem \ref{thm:gh-einstein}, since $W$ admits a weak K\"ahler-Einstein metric, the Futaki invariant vanishes identically and $\mathrm{Aut}(W)$ is reductive. Then, by the Luna slice theorem, there is a test configuration of $(M,-K_M^r)$ with $(W,\mathcal{O}_{\PP^N}(1))$ as the central fiber. Since $M$ is $K$-stable (by virtue of admitting a K\"ahler-Einstein metric), this forces $W$ to be biholomorphic to $M$ and $\omega_\infty$ to be a smooth K\"ahler-Einstein metric. But then by the uniqueness  of K\"ahler-Einstein metrics, $\omega_\infty$ is isometric to $\omega_{KE}$, and hence $(M,\omega_i)\xrightarrow{d_{GH}} (M,\omega_{KE})$. %Once again by uniqueness, this implies that the intial sequence must also converge in Gromov-Hausdorff distance to $(M,\omega_{KE}),$ thereby finishing the proof of Theorem \ref{thm:gh-einstein}.
\end{proof}

We would like to remark that Theorem \ref{thm:gh-einstein} can also be proved by using the result in \cite{LS} and we sketch the proof below. By the assumption of Theorem \ref{thm:gh-einstein}, $(M, \omega_i)$ converges to a metric space $(Z, d)$ after passing to a subsequence since the diameter is uniformly bounded above by volume comparison. The main result of \cite{LS} states that $Z$ is an $n$-dimensional normal projective variety. For sufficiently large $k>0$,  the $L^2$-orthonormal basis $\{\sigma^{(i)}_0, ..., \sigma^{(i)}_{N_k }\}$ of $H^0(M, (-K_M)^k)$ with respect to $\omega_i$ and its induced hermitian metric on $-K_X$ converge to an orthonormal basis of $H^0(Z,(- K_Z)^k)$ thanks to the partial $C^0$-estimate from \cite{LS}.  The basis $\{\sigma^{(i)}_0, ..., \sigma^{(i)}_{N_k }\}$ induces a sequence of Fubini-Study metrics 
$$\theta_i = k^{-1} \ddbar \log \left( | \sigma^{(i)}_0|^2+...+ |\sigma^{(i)}_{N_k }|^2 \right)$$ and $\omega_i = \theta_i + \ddbar \varphi_i$ with $\varphi_i$ uniformly bounded in $L^\infty(M)$. Furthermore, if we let $\Omega_i = \left( | \sigma^{(i)}_0|^2+...+ |\sigma^{(i)}_{N_k }|^2 \right)^{-1/k}$ be the induced volume form on $M$, then $\varphi_i$ satisfies the following complex Monge-Ampere equation
$$(\theta_i + \ddbar \varphi_i)^n= e^{-(1-\delta_i)\varphi_i - \delta_i f_i} \Omega_i,  $$
where $\theta_i + \ddbar f_i\geq 0$ and $\int_M e^{-\delta_i f_i}\Omega_i$ is uniformly bounded for all $i$. 
After letting $i \rightarrow \infty$, the limiting equation is given by 
$$(\theta_\infty + \ddbar \varphi_\infty)^n= e^{-\varphi_\infty + F_\infty } \Omega_\infty, $$
for some  global plurisubharmonic function $F_\infty$  on $Z$. The reader can refer to \cite{RS} for more details (cf. Section 3). This immediately implies that $F_\infty$ is a constant and $\omega_\infty= \theta_\infty+ \ddbar \varphi_\infty$ is a K\"ahler-Einstein metric with bounded local potentials. This replaces the the proof of Lemma \ref{lem:qfano} and Theorem \ref{thm:main} is proved by the same argument as above using $K$-stability to show that $Z$ must be biholomorphic to $M$ and $\omega_\infty$ is the unique K\"ahler-Einstein metric on $Z$ up to an automorphism of $Z$. 

\medskip

Now we are ready to prove Theorem \ref{thm:main}.
\begin{proof}[Proof of Theorem \ref{thm:main}] We argue by contradiction. By choosing $\delta$ small enough, by the appendix of \cite{Z1}, we may assume that $M$ is bi-holomorphic to $\PP^n$. In particular, $[\omega]$ is a multiple of $c_1(M)$. Suppose there exists an $\vep>0$, a sequence $\delta_i\rightarrow 0$ and a sequence of metrics $\omega_i$ on $\PP^n$ such that 
$$
\Rc(\omega_i) \geq  \omega_i,$$
$$\Vol(\PP^n, \omega_i) \geq (1-\delta_i) \Vol(\PP^n, \omega_{\PP^n}),$$
but
\begin{align}\label{eq:contradiction} 
d_{GH}\Big((\PP^n,\omega_i),(\PP^n,\omega_{\PP^n})\Big)  \geq \vep.
\end{align}
Consider the rescaled metrics $\tilde\omega_i =  \frac{\Vol(\PP^n, \omega_{\PP^n})^{1/n}}{\Vol(\PP^n, \omega_i)^{1/n}}\omega_i$. Then, $\Vol(\PP^n, \tilde\omega_i) = \Vol(\PP^n, \omega_{\PP^n}) $, and so $\tomega_i\in c_1(\PP^n)$. Moreover, we also have 
\begin{align*}
 \omega_i \leq \tomega_i \leq \frac{1}{1-\delta_i}\omega_i\\
\Rc(\tomega_i) \geq (1-\delta_i)\tomega_i.
\end{align*}
By Theorem \ref{thm:gh-einstein} above,  $(\PP^n,\tomega_i)\xrightarrow{d_{GH}} (\PP^n,  \omega_{\PP^n})$. Since  $\frac{\Vol(\PP^n, \omega_{\PP^n}) }{\Vol(\PP^n, \omega_i) }$ is almost one, we can make sure that for $i>>1$,  
\begin{align*}
d_{GH}\Big((\PP^n,\tomega_i),(\PP^n, \Vol(\omega_i)^{-1/n}\omega_{\PP^n})\Big)  \leq \frac{\vep \Vol(\PP^n, \omega_{\PP^n})^{1/2n}}{2 \Vol(\omega_i)^{1/2n}}.
\end{align*} This contradicts the inequality in \eqref{eq:contradiction}.
\end{proof}

\bigskip
\bigskip

\end{document}